\newtheorem{theorem}{Theorem}
\newtheorem{proposition}[theorem]{Proposition}
\newtheorem{corollary}[theorem]{Corollary}
\newtheorem{prop}{Proposition}
\newtheorem{thm}{Theorem}
\theoremstyle{definition}
\newtheorem{example}[theorem]{Example}
\theoremstyle{remark}
\newtheorem{remark}[theorem]{Remark}
\newcommand{\DD}{{\mathbb D}}
\DeclareMathOperator{\Aut}{Aut} 
 \DeclareMathOperator{\re}{Re}
\DeclareMathOperator{\im}{Im}
\renewcommand{\phi}{\varphi}
\subjclass[2010]{32A36, 32F45, 32U35}
\begin{document}

\title[Suita conjeture and problem of Wiegerinck]{Generalizations of the higher dimensional Suita conjecture and its relation with a problem of Wiegerinck}

%\address{Carl von Ossietzky Universitat Oldenburg, Institut fur Mathematik, Postfach 2503, ¨
%D-26111 Oldenburg, Germany}

\address{Institute of Mathematics, Faculty of Mathematics and Computer Science, Jagiellonian
University,  \L ojasiewicza 6, 30-348 Krak\'ow, Poland}

\author{Zbigniew B\l ocki}\email{Zbigniew.Blocki@uj.edu.pl}
\author{W\l odzimierz Zwonek}\email{Wlodzimierz.Zwonek@uj.edu.pl}

\thanks{The first author was supported by the Ideas Plus grant 0001/ID3/2014/63 of the Polish Ministry of Science and Higher Education and the second named author by the OPUS grant no. 2015/17/B/ST1/00996 financed by the National Science Centre, Poland.}

\keywords{Suita conjecture, Bergman kernel, Azukawa indicatrix, balanced domains, problem of Wiegerinck}

\begin{abstract} We generalize the inequality being a counterpart of the several complex variables version of the Suita conjecture. For this aim higher order generalizations of the Bergman kernel are introduced. As a corollary some new partial results on the dimension of the Bergman space in pseudoconvex domains are given. A relation between the problem of Wiegerinck on possible dimension of the Bergman space of unbounded pseudoconvex domains in general case and in the case of balanced domains is also shown. Moreover, some classes of domains where the answer to the problem of Wiegerinck is positive are given. Additionally, regularity properties of functions involving the volumes of Azukawa indicatrices are shown.

\end{abstract}
\maketitle

\section{Introduction} Recall that for the domain $D\subset\mathbb C^n$, $w\in D$ we define \textit{the Bergman kernel $K_D$} as follows
\begin{equation}
K_D(w):=\sup\left\{|f(w)|^2:f\in\mathcal O(D),\
     ||f||_D^2:=\int_D|f|^2d\lambda^{2n}\leq 1\right\}.
\end{equation}
We put $L_h^2(D):=L^2(D)\cap\mathcal O(D)$.

Additionally, if $K_D(z)>0$ then we denote by $\beta_D$  \textit{ the Bergman metric } induced by the Bergman kernel:
\begin{equation}
\beta_D(z;X):=\sqrt{\sum_{j,k=1}^n\frac{\partial^2\log K_D(z)}{\partial z_j\bar z_k}X_j\overline{X_k}},\;  X\in\mathbb C^n.
\end{equation}

We also define \textit{the Azukawa pseudometric} as follows
\begin{equation}
A_D(w;X):=\exp\left(\limsup_{\lambda\to 0}(G_{D}(w+\lambda X,w)-\log|\lambda|)\right),
\end{equation}
$w\in D$, $X\in\mathbb C^n$, 
where $G_D(\cdot,w)=G_w(\cdot)$ denotes \textit{the pluricomplex Green function with the pole at $w$}.

Denote also \textit{the Azukawa indicatrix at $w$}:
\begin{equation}
I_D(w):=\{X\in\mathbb C^n:A_D(w;X)<1\}.
\end{equation}

Recall that a recently obtained version of the higher dimensional version of the Suita conjecture (see \cite{Blo-Zwo 2015}) 
\begin{equation}\label{equation:suita-conjecture}
K_D(w)\geq\frac{1}{\lambda^{2n}(I_D(w))},\; w\in D
\end{equation}
which holds for any pseudoconvex domain may be formulated as follows
\begin{equation}
K_D(w)\geq K_{I_D(w)}(0),\; w\in D.
\end{equation}
Making use of the reasoning as in \cite{Blo 2014a}, \cite{Blo 2014b} and \cite{Blo-Zwo 2015} we generalize this inequality (see Theorem~\ref{theorem:non-decreasing}) which then may be applied to getting positive results on non-triviality of the Bergman space and its infinite dimensionality (see Section~\ref{section:suita-dimension}). Thus it gives a partial solution to a problem of Wiegerinck on the possible dimension of the Bergman space on unbounded pseudoconvex domains (see the problem posed in \cite{Wie 1984}). 

The generalization of the Suita conjecture requires the definition of the higher order Bergman kernels. The introduced objects as well as analoguous inequalities have been recently presented in the case of one dimensional domains in the  paper \cite{Blo-Zwo 2018}. 

In Section~\ref{section:wiegerinck-positive} we present other classes of domains where the problem of Wiegerinck is solved positively.

In our paper we also present some results that are motivated by the objects that were introduced and studied in the paper \cite{Blo-Zwo 2015}; in particular, in Section~\ref{section:regularity} we show regularity properties of the volume of the Azukawa indicatrix.

\section{Higher dimensional generalization of the Suita conjecture}
Let $H$ be a homogeneuous polynomial on $\mathbb C^n$ of degree $k$, $H(z)=\sum_{|\alpha|=k}a_{\alpha}z^{\alpha}$. 
We may well define the operator
\begin{equation}
P_{H}(f):=\sum_{|\alpha|=k}a_{\alpha}D^{\alpha}f,
\end{equation}
where $f\in\mathcal O(D)$ for some domain $D\subset\mathbb C^n$.

For the fixed domain $D\subset\mathbb C^n$, $z\in D$  we define
\begin{multline}
K_D^H(z):=\\
\sup\{|P_{H}(f)(z)|^2:f^{(j)}(z)=0,\; j=0,\ldots,k-1,\; f\in L_h^2(D), ||f||_D\leq 1\}.
\end{multline}
$f^{(j)}(z)$ denotes the $j$-th Frechet derivative of $f$ at $z$ - it is meant here as a homogeneuous polynomial of degree $j$.

Note that
\begin{equation}
K_D^{1}(z)=K_D(z).
\end{equation}
For $X\in\mathbb C^n$ put $H_X(z):=X_1z_1+\ldots+X_nz_n$. If $K_D(z)>0$ then
\begin{equation}
\beta_D^2(w;X)=\frac{K_D^{H_X}(w)}{K_D^{1}(w)}.
\end{equation}
We also put
\begin{multline}
K_D^{(k)}(w;X):=K_D^{H_X^k}(w)=\\
\sup\{|f^{(k)}(w)(X)|^2:f\in L_h^2(D),\; f^{(j)}(w)=0,\;j=0,\ldots,k-1,\; ||f||_D\leq 1\}.
\end{multline} 
Note that in the case $n=1$ we have $K_D^{(k)}(z;1)=K_D^{(k)}(z)$, where the expression on the right side is understood as in the paper \cite{Blo-Zwo 2018}. 

Following the proof of the analoguous result in the case of the Bergman kernel we get the following fundamental properties of $K_D^{H}$.

\begin{proposition}\label{proposition:basic-properties}
\begin{itemize}

\item Let $F:D\to G$ be a biholomorphic mapping, and let $H$ be a homogeneuous polynomial of degree $k\in\mathbb N$, $w\in D$. Then
\begin{equation}
K_G^{H}(F(w))=K_D^{H\circ F^{\prime}(w)}(w)|\det F^{\prime}(w)|^{2}.
\end{equation}

\item Let $D_1,\ldots,D_m$ be domains in $\mathbb C^n$, $w^j\in D_j$, and let $H^j$ be a homogeneuous polynomial on $\mathbb C^n$. Then
\begin{equation}
K_{D_1\times\ldots\times D_m}^{H_1\times\ldots\times H_m}(w^1,\ldots,w^m)=K_{D_1}^{H_1}(w^1)\cdot\ldots\cdot K_{D_m}^{H_m}(w^m).
\end{equation}

\item If $D$ is a balanced pseudoconvex domain, $H$ is a homogeneuous polynomial on $\mathbb C^n$ then
\begin{equation}
K_D^H(0)=\frac{|P_{D,H}(H^*)|^2}{||H||_{D}^2}=\frac{\left|\sum_{|\alpha|=k}|a_{\alpha}|^2\alpha!\right|^2}{\int_D|H(z)|^2d\lambda^{2n}(z)},
\end{equation}
where $H^*(z)=\sum_{|\alpha|=k}\bar a_{\alpha}z^{\alpha}$.
\end{itemize}

\end{proposition}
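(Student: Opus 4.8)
The plan is to prove the three properties of $K_D^H$ in turn, following the model of the classical Bergman kernel theory. The common engine throughout is that $K_D^H(z)$ is defined as a supremum of $|P_H(f)(z)|^2$ over the unit ball of a closed subspace of $L_h^2(D)$, namely the subspace $V_z := \{f \in L_h^2(D) : f^{(j)}(z)=0,\ j=0,\dots,k-1\}$ of functions vanishing to order $k$ at $z$. On this subspace, the restricted functional $f \mapsto P_H(f)(z)$ is a bounded linear functional, so by Riesz representation the supremum is attained and equals the squared norm of the representing function; this gives both existence of extremals and a rigid structure to exploit.

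\medskip
\noindent\textbf{Transformation rule.} For the first (biholomorphic) property, I would transport functions through $F$: if $g \in L_h^2(G)$, then $f := (g\circ F)\cdot \det F' \in L_h^2(D)$ and the change-of-variables formula gives $\|f\|_D = \|g\|_G$, so this is an isometry $L_h^2(G)\to L_h^2(D)$. The key local computation is the behavior of the $k$-jet at $w$. Since $F'(w)$ is the leading-order derivative of $F$ at $w$, for a function $g$ vanishing to order $k$ at $F(w)$ the lowest-order term of $g\circ F$ at $w$ is obtained by substituting the linear map $F'(w)$; concretely, $(g\circ F)^{(k)}(w)(\cdot) = g^{(k)}(F(w))\big(F'(w)\,\cdot\big)$, the higher-order corrections all involving lower-order (hence vanishing) derivatives of $g$. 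Multiplying by $\det F'(w)$ at $w$ and expressing $P_H$ in terms of the top-degree jet, one converts $P_H$ applied at $F(w)$ into $P_{H\circ F'(w)}$ applied at $w$, with the factor $|\det F'(w)|^2$ emerging from the Jacobian. Taking the supremum over the (isometrically matched) unit balls yields the claimed identity. The one point needing care is that $P_H(f)(z)$ depends only on the degree-$k$ part of the Taylor expansion of $f$ at $z$ precisely when the lower derivatives vanish, which is exactly the constraint defining $V_z$; this is what lets the nonlinear chain rule collapse to the linear substitution.

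\medskip
\noindent\textbf{Product rule.} For the second property I would use that $L_h^2(D_1\times\dots\times D_m)$ contains the algebraic tensor product of the $L_h^2(D_j)$ as a dense subspace, and that the vanishing constraints and the differential operator $P_{H^1\times\dots\times H^m}$ both factor across the product (each $P_{H^j}$ acts only in the $j$-th block of variables). One direction of the inequality follows by testing with product functions $f_1\otimes\dots\otimes f_m$ built from near-extremals in each factor, which reduces $|P(f)(w)|^2$ to the product of the individual $|P_{H^j}(f_j)(w^j)|^2$ while keeping the norm multiplicative. The reverse inequality follows from the reproducing structure: the extremal on the product can be shown to be the tensor product of the factor extremals, for instance by expanding an arbitrary competitor in a product orthonormal basis and noting that the constrained functional separates. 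I expect this to be routine once the separation of variables is set up cleanly.

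\medskip
\noindent\textbf{Balanced case.} The third property is where I expect the real work. Here $D$ is balanced pseudoconvex and $w=0$, so $L_h^2(D)$ admits the homogeneous expansion $f = \sum_j Q_j$ into homogeneous polynomials $Q_j$ of degree $j$, and the circle action $z\mapsto e^{i\theta}z$ shows these homogeneous components are mutually orthogonal in $L^2(D)$. The vanishing conditions $f^{(j)}(0)=0$ for $j<k$ simply kill the components of degree below $k$, and $P_H(f)(0)$ reads off a linear functional of the degree-$k$ component $Q_k$ alone (all higher components differentiate to something vanishing at $0$, all lower are zero). So the problem collapses to a finite-dimensional optimization over homogeneous polynomials of degree $k$: maximize $|P_H(Q)(0)|^2$ subject to $\|Q\|_D \le 1$. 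The functional $Q\mapsto P_H(Q)(0)$ is, up to the constant factors $\alpha!$ coming from $D^\alpha(z^\beta)(0)=\alpha!\,\delta_{\alpha\beta}$, the pairing of the coefficient vector of $Q$ with the coefficient vector of $H$; Cauchy-Schwarz (equivalently, Riesz) then identifies the extremal as the $\|\cdot\|_D$-dual element and produces $|P_{D,H}(H^*)|^2/\|H\|_D^2$, with $H^*$ the conjugate-coefficient polynomial. The main obstacle is bookkeeping the combinatorial factors: one must verify that $P_H(H^*)(0) = \sum_{|\alpha|=k} \bar a_\alpha \cdot a_\alpha \cdot \alpha! \cdot(\text{correct multiplicities})$ matches $\sum_{|\alpha|=k}|a_\alpha|^2\alpha!$, tracking the $\alpha!$ from differentiation against the monomial structure of $H^*$, and that the normalization in the denominator is $\|H\|_D^2 = \int_D |H|^2\,d\lambda^{2n}$ rather than a weighted $\ell^2$ norm of coefficients. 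Once these constants are pinned down, the stated closed form follows immediately.
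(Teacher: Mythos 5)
The paper gives no actual proof of this proposition beyond the one-line remark that one follows the classical Bergman-kernel arguments, and your three-part plan (transport isometry plus jet calculus, tensor separation, homogeneous expansion plus a finite-dimensional extremal problem) is exactly that intended route; your product argument in particular is sound and closes as you sketch, since the extremal on the product is the tensor of the factor extremals. However, your first step does not survive careful execution as you state it. The isometry $g\mapsto(g\circ F)\det F'$ and the jet identity $(g\circ F)^{(k)}(w)=g^{(k)}(F(w))\circ F'(w)$ are correct, but the passage from jets to the operators $P_H$ is contragredient, not covariant: for the pairing $P_H(Q)(0)=\sum_{|\alpha|=k}a_\alpha\alpha!\,q_\alpha$ one has the adjunction $P_H(Q\circ A)(0)=P_{H\circ A^{T}}(Q)(0)$ (test it on $H=z_1^k$: both sides equal $k!\,Q(Ae_1)$). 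Running your transport with this gives $K_D^{H\circ F'(w)^{T}}(w)=K_G^{H}(F(w))\,|\det F'(w)|^{2}$: a transpose appears, and the Jacobian factor lands on the opposite side from the identity you claim to reproduce. The paper's own scaling relation $K_{D_a}^{H}(0)=e^{2(n+k)a}K_{\{G<a\}}^{H}(0)$ confirms this placement (with $F(z)=e^{-a}z$ the printed bullet would give the reciprocal factor), so your assertion that "taking the supremum over the isometrically matched unit balls yields the claimed identity" is not what your own computation produces — your method is right but, done carefully, it detects a misstatement rather than proving the printed formula.

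The more serious gap is in the balanced case. Your reduction to the degree-$k$ homogeneous component is fine: the single circle action $z\mapsto e^{i\theta}z$ makes components of different degrees orthogonal. But the concluding step — "Cauchy--Schwarz (equivalently, Riesz) then identifies the extremal as the $\|\cdot\|_D$-dual element and produces $|P_{D,H}(H^*)|^2/\|H\|_D^2$" — silently assumes that the monomials $\{z^\alpha\}_{|\alpha|=k}$ are pairwise orthogonal in $L^2(D)$. Balancedness gives only the orthogonality of the grading; within a fixed degree the Gram matrix $M_{\alpha\beta}=\int_D z^\alpha\overline{z^\beta}\,d\lambda^{2n}$ need not be diagonal, and then the Riesz representer of $Q\mapsto P_H(Q)(0)$ is not a multiple of $H^*$: the supremum is the quadratic form of the inverse Gram matrix on the coefficient vector $(\alpha!\,a_\alpha)$, while testing with $H^*$ only yields the lower bound $|P_H(H^*)(0)|^2/\|H^*\|_D^2$ (note also that $\|H^*\|_D\neq\|H\|_D$ in general, since a balanced domain need not be conjugation invariant). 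Concretely, let $D$ be the image of the Reinhardt domain $\{|u|+|v|<1\}\subset\mathbb C^2$ under the linear map $(u,v)\mapsto(u,v-u)$ — balanced, pseudoconvex, not Reinhardt — and take $H=z_1$, $m:=\int|u|^2\,d\lambda$; then $M_{11}=m$, $M_{12}=-m$, $M_{22}=2m$, the true supremum is $(M^{-1})_{11}=2/m$, whereas $|P_H(H^*)|^2/\|H\|_D^2=1/m$. So the closed formula, and with it your derivation, is valid when the degree-$k$ monomials are orthogonal (e.g. for complete Reinhardt $D$), but the identification step genuinely fails for merely balanced domains, and no bookkeeping of the $\alpha!$'s repairs it; you should either add that orthogonality hypothesis or report the obstruction instead of asserting the identity.
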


To make the presentation simpler we shall often assume that the point (pole of the Green function) will be $w=0$.
 In such a case we denote
$D_a:=e^{-a}\{G<a\}$ for $a\leq 0$. Additionally, put $D_{-\infty}:=I_D(0)$. We shall often use the obvious fact that the sets
$\{G<a\}$ and $D_a$ are linearly isomorphic, $-\infty<a\leq 0$.

The properties of the Green function give the equality $(D_a)_b=D_{a+b}$ for $-\infty\leq a,b\leq 0$. Note also that $K_{D_a}^{H}(0)=e^{2(n+k)a}
K_{\{G<a\}}^{H}(0)$.

Our main result is the following.

\begin{theorem}\label{theorem:non-decreasing} Let $D$ be a pseudoconvex domain in $\mathbb C^n$, $w=0\in D$ and let $H$ be a homogeneuous polynomial of degree $k$. Then the function 
\begin{equation}
[-\infty,0]\owns a \to K_{D_a}^{H}(0)  
\end{equation}
is non-decreasing and continuous at $a=-\infty$. In particular, $K_{I_D(0)}^H(0)\leq K_D^H(0)$ so $K_{I_D(0)}^{(k)}(0;X)\leq K_{D}^{(k)}(0;X)$ for any $X\in\mathbb C^n$, too.
\end{theorem}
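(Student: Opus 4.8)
The plan is to establish monotonicity of $a\mapsto K_{D_a}^H(0)$ via a Donnelly--Fefferman / H\"ormander-type $\bar\partial$-estimate argument, exactly in the spirit of \cite{Blo 2014a}, \cite{Blo 2014b} and \cite{Blo-Zwo 2015}, and then extract the limiting inequality at $a=-\infty$ from a normal-families/extraction argument. First I would fix $a<b\le 0$ and take an extremal function $f$ for $K_{D_b}^H(0)$, that is, $f\in L^2_h(D_b)$ with $f^{(j)}(0)=0$ for $j=0,\dots,k-1$, $\|f\|_{D_b}\le 1$, and $|P_H(f)(0)|^2=K_{D_b}^H(0)$. Since $D_a\subset D_b$ (because $\{G<a\}\subset\{G<b\}$ and scaling is compatible with the relation $(D_a)_b=D_{a+b}$), the restriction $f|_{D_a}$ is a competitor for $K_{D_a}^H(0)$, but its $L^2$-norm over the smaller domain is only smaller, which gives the \emph{wrong} direction. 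So the real content is the reverse: I would instead take the extremizer $g$ on the smaller set $D_a$ and produce a function on $D_b$ with controlled norm and the same $k$-jet data at $0$, which forces $K_{D_b}^H(0)\ge K_{D_a}^H(0)$.

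The key step is the extension-with-estimate. Given the extremal $g$ for $K_{D_a}^H(0)$, one wants a holomorphic $\tilde g$ on $D_b$ with $P_H(\tilde g)(0)=P_H(g)(0)$, vanishing of the lower-order jet $\tilde g^{(j)}(0)=0$ for $j<k$, and $\|\tilde g\|_{D_b}^2\le \|g\|_{D_a}^2$ (up to the scaling factors $e^{2(n+k)a}$ recorded in the excerpt). The standard device is to solve a $\bar\partial$-problem $\bar\partial u=\bar\partial(\chi g)$ with a cutoff $\chi$ supported where $G$ is very negative, using the weight $e^{-2(n+k)G}$ (the Green function itself as plurisubharmonic weight) together with the sublevel-set structure $D_a=e^{-a}\{G<a\}$; the positivity of the complex Hessian of $G$ on the pseudoconvex $D$ supplies the required curvature. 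The jet conditions at $0$ are enforced by noting that the weight $e^{-2(n+k)G}$ has a pole of the correct order $2(n+k)$ at the origin, so any $L^2$ solution $u$ of the $\bar\partial$-equation against this weight is automatically forced to vanish to order $\ge k$ together with the matching derivative $P_H(u)(0)=0$; hence $\tilde g=\chi g-u$ inherits exactly the prescribed $k$-jet of $g$ at $0$. This is precisely the mechanism by which \cite{Blo-Zwo 2015} obtained the Suita-type bound $K_D(w)\ge 1/\lambda^{2n}(I_D(w))$, and the higher-order operator $P_H$ only changes the pole order of the weight from $2n$ to $2(n+k)$ and the jet-vanishing order from $0$ to $k$.

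The remaining point, continuity at $a=-\infty$, asks that $K_{I_D(0)}^H(0)=\lim_{a\to-\infty}K_{D_a}^H(0)$. Here I would use that $D_a\to D_{-\infty}=I_D(0)$ in a suitable sense (the sets $e^{-a}\{G<a\}$ increase to the Azukawa indicatrix, a standard fact about the rescaled sublevel sets of the Green function). Monotonicity already gives $\lim_{a\to-\infty}K_{D_a}^H(0)\le K_{I_D(0)}^H(0)$ provided $I_D(0)\subset D_a$ in the limit, and the reverse inequality $K_{I_D(0)}^H(0)\le \liminf K_{D_a}^H(0)$ would follow by taking a near-extremal competitor on the indicatrix, restricting or approximating it on $D_a$ for $a$ small, and passing to the limit via a normal-families argument to recover the full value of $K^H_{I_D(0)}(0)$; the scaling identity $K_{D_a}^H(0)=e^{2(n+k)a}K_{\{G<a\}}^H(0)$ keeps all quantities comparable. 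The main obstacle I anticipate is the $\bar\partial$-estimate with the simultaneous demands of the singular weight (to kill the jet and extract $P_H$) and the $L^2$-norm bound tight enough to yield monotonicity rather than a lossy inequality; getting the curvature/weight bookkeeping to close with the exact constant $2(n+k)$, and verifying that the cutoff error term is absorbed as $a\to-\infty$, is the delicate part, whereas the jet-prescription and the final limit are comparatively routine once the estimate is in hand.
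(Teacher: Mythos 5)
Your overall architecture matches the paper's: reduce monotonicity to the single comparison between $D$ and $D_a$ via $(D_a)_b=D_{a+b}$, solve $\bar\partial u=\bar\partial(f\,\chi\circ G)$ by Donnelly--Fefferman with a Green-function weight whose pole order forces the $k$-jet of $u$ to vanish (so that $F=f\,\chi\circ G-u$ inherits the jet of $f$ and $P_H(F)(0)=\chi(-\infty)P_H(f)(0)$), and then pass to $a=-\infty$. But there is a genuine gap exactly at the point you flag as ``delicate'': a single application of the Donnelly--Fefferman estimate cannot produce the sharp constant $e^{2(n+k)a}$, hence cannot yield the claimed inequality $\|\tilde g\|_{D_b}^2\le\|g\|_{D_a}^2$ up to scaling. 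The estimate loses a multiplicative factor (the factor $4$ in Donnelly--Fefferman together with the cutoff contribution), and what comes out is only
\begin{equation*}
K_D^H(0)\ \geq\ \frac{Ei(-(n+k+1)a)^2}{\bigl(Ei(-(n+k+1)a)+\sqrt{C}\bigr)^2}\,K_{\{G<a\}}^H(0),
\end{equation*}
which is strictly weaker than $K_D^H(0)\ge e^{2(n+k)a}K_{\{G<a\}}^H(0)$ and gives no monotonicity. The paper closes this gap with the \emph{tensor power trick}: it applies the lossy estimate on the product $D^m\subset\CC^{nm}$ with the polynomial $H\times\dots\times H$, invokes the multiplicativity $K_{D^m}^{H\times\dots\times H}(0)=\bigl(K_D^H(0)\bigr)^m$ from Proposition~\ref{proposition:basic-properties} together with $\{G_{D^m}<a\}=\{G<a\}^m$, takes $m$-th roots and lets $m\to\infty$, so that the lossy constant converges to the sharp $e^{2(n+k)a}$. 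Your proposal never invokes the product formula, and without it (or, alternatively, without replacing Donnelly--Fefferman by the sharp Ohsawa--Takegoshi extension theorem in the style of Berndtsson--Lempert, which you do not mention) the argument does not close. This is why the paper states Proposition~\ref{proposition:basic-properties} beforehand: the product property is not decorative but the engine of the sharp bound.

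Two secondary inaccuracies: first, the sets $D_a=e^{-a}\{G<a\}$ do \emph{not} in general increase to $I_D(0)$ as $a\to-\infty$, so your monotone-exhaustion picture for the endpoint is wrong as stated; the paper instead uses Zwonek's continuity of the Azukawa metric (and the existence of the limit in its definition) to obtain convergence $D_a\to I_D(0)$ in the Hausdorff sense, from which stability of the kernels gives both the inequality at $a=-\infty$ and the continuity there. Second, you omit the preliminary Ramadanov-type reduction to bounded hyperconvex $D$, which is what legitimizes working with a well-behaved Green function throughout. Your jet-forcing mechanism via the pole order of the weight is essentially right (the paper takes $\phi=2(n+k+1)G$, slightly stronger than your $2(n+k)G$, with the $Ei$-type cutoff calibrated to it), but the missing tensor-power step is the decisive idea of the proof.
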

\begin{proof} We compile the reasoning as in the proof of Theorem 1 in \cite{Blo 2014a}, the proof of Theorem 6.3 in \cite{Blo 2014b} and the proof of Theorem 1 in \cite{Blo-Zwo 2015}. 

Standard Ramadanov type reasoning allows us to reduce the situation to the case when $D$ is a bounded hyperconvex domain.

Let us take $a\leq 0$. First we show the monotonicity of the function on the interval $(-\infty,0]$. 

The properties of the Green function and thus the ones of the sets $D_a$ reduce the problem of the monotonicity of the function for $a>-\infty$ to the proof of the inequality $K_D^{H}(0)\geq K_{D_a}^{H}(0)$.

Take any $f\in L_h^2(\{G<a\})$ with $f^{(j)}(0)=0$, $j=0,\ldots,k-1$. We use a theorem of Donelly-Feffermann (see \cite{Don-Fef 1983} or Theorem 2.2 in \cite{Blo 2014b}) with the following data
\begin{equation}
\phi:=2(n+k+1)G,\;\psi:=-\log(-G),\;\alpha:=\bar\partial (f\chi\circ G),
\end{equation}
where 
\begin{equation}
\chi(t):=
\begin{cases}
0,& t\geq a\\
\int_{-a}^{-t}\frac{e^{-(n+k+1)s}}{s}ds,& t<a
\end{cases}.
\end{equation}
We take $u$ with $\bar\partial u=\alpha$ with the estimate as in \cite{Blo 2014a}.

Then the holomorphic function
\begin{equation}
F:=f\chi\circ G-u
\end{equation}
satisfies $F^{(j)}(0)=0$ and $P_{D,H}(F)(0)=\chi(-\infty)P_{D,H}(f)(0)=Ei(-(n+k+1)a)P_{D,H}(f)(0)$. Moreover,
\begin{equation}
||F||_{L^2(D)}\leq (\chi(-\infty)+\sqrt{C})||f||_{\{G<a\}},
\end{equation}
which implies that
\begin{equation}
K_D^{H}(0)\geq c(n,a,k)K_{\{G<a\}}^{H}(0),
\end{equation}
where $c(n,a,k)=\frac{Ei(-(n+k+1)a)^2}{(Ei(-(n+k+1)a)+\sqrt{C})^2}$.

The tensor power trick together with Proposition~\ref{proposition:basic-properties} gives the inequality
\begin{equation}
K_D^{H}(0)\geq e^{2(n+k)a}K_{\{G<a\}}^{H}(0)=K_{D_a}^{H}(0).
\end{equation} 
Similarly, as in \cite{Blo-Zwo 2015} we note that the continuity of the Azukawa metric (and the existence of the limit in its definition) - see \cite{Zwo 2000a} and \cite{Zwo 2000b} - implies the convergence in the sense of Hausdorff:
$D_a\to I_D(w)$
which together with basic properties of the Bergman functions implies the desired inequality on $[-\infty,0]$ and the continuity at $-\infty$.
\end{proof}

%\begin{remark}
%It easily follows from the above proof and the properties of the sublevel sets of the Green function that the function
%\begin{equation}
%[0,\infty)\owns a \to e^{2n(k+1)a}K_{\{G<-a\}}^{(k)}(w;X)  
%\end{equation}
%is non-increasing.

%\end{remark}

\begin{remark} It would be interesting to verify whether the function
\begin{equation}
(-\infty,0]\owns a\to \log K_{D_a}^{H}(w)
\end{equation}
is convex as it is in the case of $H\equiv 1$ (see final remark in \cite{Blo 2015})?
\end{remark}

Note that the nontriviality of the space $L_h^2(D)$ is equivalent to the fact that for any $w\in D$ there are a $k$ and $X$ such that $K_D^{(k)}(w;X)>0$. 

The infinite dimensionality of $L_h^2(D)$ is equivalent to the existence for any (equivalently, some) $w\in D$ a subsequence $(k_{\nu})$ and a sequence $(X^{\nu})$ such that $K_D^{(k_{\nu})}(w;X^{\nu})>0$. Therefore, we conclude

\begin{proposition}\label{proposition:first-implication} Let $D$ be a pseudoconvex domain in $\mathbb C^n$. 
\begin{itemize}
\item If for some $w\in D$ the space $L_h^2(I_D(w))$ is not trivial then so is the space $L_h^2(D)$.
\item
If for some $w\in D$ the dimension of $L_h^2(I_D(w))$ is infinite then so is the dimension of $L_h^2(D)$.

\end{itemize}

\end{proposition}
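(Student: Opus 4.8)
The plan is to derive both implications as immediate consequences of Theorem~\ref{theorem:non-decreasing}, combined with the two characterizations of non-triviality and of infinite dimensionality of $L_h^2$ that have just been recorded in terms of the kernels $K_D^{(k)}(\cdot\,;\cdot)$. First I would reduce to the case $w=0$ by an affine translation of $D$. This is harmless, since the whole apparatus transforms covariantly: translating by $w$ carries $L_h^2(D)$ isometrically onto $L_h^2(D-w)$, the Green function and the Azukawa metric satisfy $A_{D-w}(0;X)=A_D(w;X)$, and consequently $I_{D-w}(0)=I_D(w)$. In particular the indicatrix is already a balanced domain whose distinguished point is the origin, so after the translation both $D$ (at its pole $0$) and $I_D(0)$ (at its center $0$) are tested at the same point, which is exactly the configuration in which the theorem's monotonicity inequality is stated.

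For the first bullet I would argue as follows. Assume $L_h^2(I_D(0))$ is non-trivial. Applying the non-triviality criterion to the domain $I_D(0)$ at its center $0$, there exist an integer $k$ and a vector $X\in\mathbb C^n$ with $K_{I_D(0)}^{(k)}(0;X)>0$. By the final assertion of Theorem~\ref{theorem:non-decreasing} we have $K_{I_D(0)}^{(k)}(0;X)\le K_D^{(k)}(0;X)$, so $K_D^{(k)}(0;X)>0$. Reading the same criterion backwards for $D$ at $0$ produces a non-zero element of $L_h^2(D)$, whence $L_h^2(D)$ is non-trivial.

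For the second bullet I would run the identical argument with the infinite-dimensionality criterion replacing non-triviality. If $\dim L_h^2(I_D(0))=\infty$, the criterion supplies a subsequence $(k_\nu)$ and vectors $(X^\nu)$ with $K_{I_D(0)}^{(k_\nu)}(0;X^\nu)>0$ for every $\nu$. Theorem~\ref{theorem:non-decreasing} propagates each of these strict inequalities to $K_D^{(k_\nu)}(0;X^\nu)>0$, and the criterion applied to $D$ then forces $\dim L_h^2(D)=\infty$. Note that here one uses the jet characterizations for the \emph{arbitrary} open set $I_D(0)$, which is legitimate because those equivalences rest only on the fact that a holomorphic function on a connected domain is determined by its full jet at a single point, and do not require pseudoconvexity of the indicatrix.

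The whole analytic weight of the proposition is thus already carried by Theorem~\ref{theorem:non-decreasing}; I expect no genuine obstacle in the argument above beyond the bookkeeping of base points described in the first paragraph. The only conceptual input that is not purely formal is the infinite-dimensionality criterion itself, which amounts to the observation that, filtering $L_h^2$ by the order of vanishing at the tested point, the associated graded pieces are finite dimensional and their total dimension equals $\dim L_h^2$; hence infinite dimensionality is equivalent to infinitely many orders $k$ actually occurring, i.e. to a subsequence $(k_\nu)$ with $K^{(k_\nu)}(0;X^\nu)>0$. Since this criterion is already stated in the excerpt, the proof of the proposition reduces to the short transfer argument, and the heavier ingredients (the $\bar\partial$-estimate of Donnelly--Fefferman/H\"ormander type and the Hausdorff convergence $D_a\to I_D(0)$) have been spent once and for all in establishing the theorem.
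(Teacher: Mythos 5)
Your proof is correct and follows essentially the same route as the paper: the authors likewise deduce both bullets directly from the inequality $K_{I_D(w)}^{(k)}(w;X)\leq K_D^{(k)}(w;X)$ of Theorem~\ref{theorem:non-decreasing} together with the stated jet characterizations of non-triviality and infinite dimensionality of $L_h^2$. Your added bookkeeping (translation to $w=0$, finite dimensionality of the graded jet pieces) merely makes explicit what the paper leaves implicit.
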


%It is obvious that the set $\{G<a\}$ and $D_a$ are biholomorphic.

In fact, one may also conclude from Theorem~\ref{theorem:non-decreasing} a more precise version of Proposition~\ref{proposition:first-implication}.

\begin{corollary}\label{corollary:dimension-inequality}  Let $D$ be a pseudoconvex domain in $\mathbb C^n$, $w\in D$, $-\infty<a\leq 0$. Then
\begin{equation}
\operatorname{dim}(L_h^2(I_D(w)))\leq \operatorname{dim}(L_h^2(D_a(w))).
\end{equation}
\end{corollary}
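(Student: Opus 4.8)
The goal is Corollary~\ref{corollary:dimension-inequality}: for a pseudoconvex domain $D$, a point $w\in D$, and $-\infty < a \le 0$, the inequality $\dim L_h^2(I_D(w)) \le \dim L_h^2(D_a(w))$ holds. After normalizing $w=0$ (using the biholomorphic invariance from Proposition~\ref{proposition:basic-properties}, so that $D_a(w)$ becomes $D_a$ and $I_D(w)$ becomes $D_{-\infty}=I_D(0)$), the plan is to build an explicit injective linear map on the level of the spaces of "derivative data" at $0$ and then invoke Theorem~\ref{theorem:non-decreasing} to transport non-triviality from $I_D(0)$ to $D_a$.

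The key observation is the remark made in the excerpt: the dimension of $L_h^2(D)$ is controlled by the set of pairs $(k,X)$ (or, more intrinsically, homogeneous polynomials $H$) for which $K_D^{H}(0) > 0$. More precisely, I would first make this correspondence precise: to each $f \in L_h^2(D)$ associate the sequence of its homogeneous Taylor components at $0$, and observe that $f\not\equiv 0$ forces some component $f^{(k)}(0)$ to be nonzero, which is exactly the condition that $K_D^{(k)}(0;X)>0$ for some $X$ of that degree. The dimension of $L_h^2(D)$ can thus be read off from the dimension of the image of the Taylor-jet map; the crucial finite-dimensional gadget is, for each degree $k$, the space $V_k(D)$ of possible $k$-th jets $f^{(k)}(0)$ arising from $f\in L_h^2(D)$ with all lower jets vanishing, and $K_D^{H}(0)>0$ detects exactly when $P_H$ does not vanish identically on $V_k(D)$.

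The heart of the argument is then a monotonicity statement for these jet spaces: I claim $V_k(I_D(0)) \subseteq V_k(D_a)$ for every $k$, or at least an injection $V_k(I_D(0)) \hookrightarrow V_k(D_a)$ compatible across degrees, from which $\dim L_h^2(I_D(0)) \le \dim L_h^2(D_a)$ follows by summing. Theorem~\ref{theorem:non-decreasing} already gives the scalar inequality $K_{I_D(0)}^H(0) \le K_{D_a}^H(0)$ for every homogeneous $H$, so whenever a jet is "seen" in $I_D(0)$ (a polynomial $H$ with $K_{I_D(0)}^H(0)>0$) it is also seen in $D_a$. The step I expect to require care is upgrading these pointwise scalar comparisons, one homogeneous polynomial at a time, into a genuine inequality of vector-space dimensions: I must ensure that linearly independent elements of $L_h^2(I_D(0))$ give rise to linearly independent jet data that survive in $D_a$. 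The cleanest route is probably to fix, for each $k$, a basis of the jet space using the Gram-Schmidt/orthogonal-projection description underlying the extremal problem defining $K_D^H$, and to show that the Donnelly--Fefferman extension constructed in the proof of Theorem~\ref{theorem:non-decreasing} respects linear combinations, so that an independent family in $L_h^2(I_D(0))$ extends to an independent family in $L_h^2(D_a)$.

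The main obstacle is precisely this last point: Theorem~\ref{theorem:non-decreasing} as stated is a statement about a single extremal quantity $K^H$, and passing from "each individual $H$ is detected" to "the whole space injects" requires either a uniform extension operator or a careful rank argument on the jet spaces $V_k$. I expect the convergence $D_a \to I_D(0)$ in the Hausdorff sense, already invoked in the proof of the theorem, to do the work for the limiting case, but for a fixed finite $a$ the honest argument is that the monotone sandwich $K_{I_D(0)}^H(0) \le K_{D_a}^H(0) \le K_D^H(0)$ forces the support of the detecting polynomials (hence the dimension of the jet spaces, hence $\dim L_h^2$) to be monotone in $a$, and one simply sums these degree-by-degree dimension inequalities.
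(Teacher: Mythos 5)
Your main route is correct, and it is essentially the deduction the paper leaves implicit: Corollary~\ref{corollary:dimension-inequality} is stated there with no proof beyond the remark that it follows from Theorem~\ref{theorem:non-decreasing}, and your jet-space formulation is the natural way to extract a dimension inequality from that family of scalar inequalities. The step you flag as needing care is pure linear-algebra duality, and you already hold the key observation: $K_G^H(0)=0$ precisely when the functional $j\mapsto P_H(j)$ annihilates the jet space $V_k(G)$, and the pairing $(H,j)\mapsto P_H(j)=\sum_{|\alpha|=k}a_\alpha\,\alpha!\,c_\alpha$ (writing $j=\sum_{|\alpha|=k}c_\alpha z^\alpha$) is nondegenerate --- this is visible in the third item of Proposition~\ref{proposition:basic-properties}, where $P_{D,H}(H^*)=\sum_{|\alpha|=k}|a_\alpha|^2\alpha!$. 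Hence the inclusion of ``detected'' polynomials, which Theorem~\ref{theorem:non-decreasing} supplies for every homogeneous $H$ (monotonicity on all of $[-\infty,0]$ gives $K_{I_D(0)}^H(0)=K_{D_{-\infty}}^H(0)\le K_{D_a}^H(0)$ directly, so no separate limiting argument for fixed $a$ is needed, contrary to your hedge at the end), is equivalent to the reverse inclusion of annihilators, i.e.\ to $V_k(I_D(0))\subseteq V_k(D_a)$ for every $k$. The summation then rests on $\dim L_h^2(G)=\sum_k\dim V_k(G)$, which follows from the filtration $W_k:=\{f\in L_h^2(G):f^{(j)}(0)=0,\ j<k\}$ together with the identity theorem ($\bigcap_k W_k=\{0\}$), plus the standard remark that representatives of bases of distinct $V_k$'s are linearly independent (inspect the lowest degree occurring in a vanishing combination). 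You should state these two points explicitly; with them your argument is complete.

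One caveat: your alternative route (a) would not work as stated for this corollary. The Donnelly--Fefferman construction in the proof of Theorem~\ref{theorem:non-decreasing} extends functions from a sublevel set $\{G<a\}$ to $D$, and $I_D(0)$ is \emph{not} a sublevel set of $G$ --- the endpoint $a=-\infty$ enters that proof only through the Hausdorff convergence $D_a\to I_D(0)$, so no extension operator with source $L_h^2(I_D(0))$ is available. The paper does use exactly your ``jet-preserving extension of an independent family'' idea, but only between $D_a$ and $D$ for finite $a$, in the proof of Proposition~\ref{proposition:dimension-equality}; for the present corollary the duality argument is the honest one, and it is good that you ultimately rested the proof on it.
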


Making use of the result from \cite{Pfl-Zwo 2017} we get the following partial solution of the problem of Wiegerinck (see \cite{Wie 1984}).

\begin{corollary} Let $D$ be a pseudoconvex domain in $\mathbb C^2$. If for some $w\in D$ the space $L_h^2(I_D(w))$ is not trivial then the dimension of $L_h^2(D)$ is infinite.
\end{corollary}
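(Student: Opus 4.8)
The plan is to combine the two tools already developed: Proposition~\ref{proposition:first-implication} (which transfers infinite dimensionality of $L_h^2(I_D(w))$ back to $D$) and the dichotomy for the Bergman space in dimension two coming from \cite{Pfl-Zwo 2017}. The key observation is that the Azukawa indicatrix $I_D(w)$ is itself a balanced pseudoconvex domain in $\mathbb C^2$ (its logarithmic homogeneity and the plurisubharmonicity of $\log A_D(w;\cdot)$, which follows from properties of the pluricomplex Green function). Thus the general dichotomy for $\mathbb C^2$ applies to $I_D(w)$ as well.

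First I would recall precisely the result of \cite{Pfl-Zwo 2017}: for a pseudoconvex domain in $\mathbb C^2$, the Bergman space $L_h^2$ is either trivial or infinite-dimensional — there is no intermediate finite positive dimension. I would apply this dichotomy directly to the domain $I_D(w)\subset\mathbb C^2$. By hypothesis $L_h^2(I_D(w))$ is nontrivial, so by the dichotomy its dimension must in fact be infinite. This is the crucial step that upgrades ``nontrivial'' to ``infinite-dimensional'' at the level of the indicatrix.

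Once infinite-dimensionality of $L_h^2(I_D(w))$ is established, I would invoke the second bullet of Proposition~\ref{proposition:first-implication}, which states exactly that infinite dimension of $L_h^2(I_D(w))$ forces infinite dimension of $L_h^2(D)$. Chaining the two implications yields the conclusion: nontriviality of $L_h^2(I_D(w))$ implies $\dim L_h^2(D)=\infty$.

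The only genuine subtlety is verifying that the dichotomy of \cite{Pfl-Zwo 2017} is applicable to $I_D(w)$, i.e.\ that the Azukawa indicatrix is a pseudoconvex (balanced) domain in $\mathbb C^2$ to which that result applies; this is where I expect the main care to be needed, since the indicatrix need not be bounded and one must ensure the cited theorem covers the unbounded pseudoconvex case. Granting that, the proof is a short two-line composition of the cited dichotomy with Proposition~\ref{proposition:first-implication}, with no additional computation required.
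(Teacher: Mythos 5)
Your argument is in substance the paper's own proof: upgrade nontriviality to infinite dimensionality on the indicatrix via \cite{Pfl-Zwo 2017}, then transfer to $D$ by the second bullet of Proposition~\ref{proposition:first-implication}. One correction is needed, though, in how you quote the cited result. The dichotomy is \emph{not} known for arbitrary pseudoconvex domains in $\mathbb C^2$ --- that is precisely the open problem of Wiegerinck this corollary partially addresses, and if such a general dichotomy were available you could apply it to $D$ itself (after the first bullet of Proposition~\ref{proposition:first-implication}) and the corollary would be vacuous as a ``partial solution.'' What \cite{Pfl-Zwo 2017} actually proves is the dichotomy, together with a precise description of nontriviality, for \emph{balanced} pseudoconvex domains in $\mathbb C^2$, with unbounded domains explicitly covered (they are the subject of that paper), so the worry you raise at the end about unboundedness is not the issue; the balancedness hypothesis is. Fortunately your first paragraph already supplies the fix: $I_D(w)=\{X:A_D(w;X)<1\}$ is a balanced pseudoconvex domain, since $A_D(w;\cdot)$ is absolutely homogeneous and $\log A_D$ is plurisubharmonic (Proposition~\ref{Azukawa-psh}). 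With the citation restated in this balanced form, your two-step composition is exactly the intended argument and the proof is complete.
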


Note that  the non-triviality of the space $L_h^2(I_D(w))$ in the case $n=2$ is precisely described in \cite{Pfl-Zwo 2017}.

\begin{remark} To answer the problem of Wiegerinck in dimension two it would be then sufficient to decide what the dimensions of $L_h^2(D)$ are in the case when $L_h^2(I_D(w))=\{0\}$ for all $w\in D$. 
The solution of that problem seems to be very probable to get. Perhaps one should start with the solution of the problem when $A_D\equiv 0$, or $G\equiv -\infty$?
\end{remark}

\section{On the finite-dimensional Bergman space on $D_a$}\label{section:suita-dimension}

%Consider at first the situation when $D$ is a bounded pseudoconvex domain. Fix the pole $w=0\in D$ and put $D_a:=e^{-a}\{G<a\}$ for $a\leq 0$. %Additionally put $D_{-\infty}:=I_D(0)$.

%The properties of the Green function give the equality $(D_a)_b=D_{a+b}$ for $-\infty\leq a,b\leq 0$. 

Note that Corollary~\ref{corollary:dimension-inequality} leaves the problem on the mutual relation between the dimensions of the spaces 
$L_h^2(D_a)$ for different $a$ open. Note that the restriction: $L_h^2(D_b)\owns f\to f(e^{a-b}\cdot)_{|D_a}\in L_h^2(D_a)$, $-\infty<a<b\leq 0$ gives the inequality
\begin{equation}
\operatorname{dim}(L_h^2(D_a)\leq \operatorname{dim}(L_h^2(D_b)).
\end{equation}
 In fact, we shall prove that the equality holds.

\begin{proposition}\label{proposition:dimension-equality} Let $D$ be a pseudoconvex domain in $\mathbb C^n$, $0\in D$. Then for any $-\infty<a\leq 0$ the dimension of $L_h^2(D_a)$ is the same.
\end{proposition}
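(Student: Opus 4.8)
The plan is to compute each $\dim L_h^2(D_a)$ by filtering the Bergman space according to the order of vanishing at the pole $0$, and then to read off its dependence on $a$ directly from Theorem~\ref{theorem:non-decreasing}. For a domain $E\ni 0$ and $k\in\mathbb N$ put $\mathcal F_k(E):=\{f\in L_h^2(E):f^{(j)}(0)=0,\ j=0,\dots,k-1\}$, and let $\mathcal P_k$ be the space of homogeneous polynomials of degree $k$ on $\mathbb C^n$. The leading-term map $f\mapsto f^{(k)}(0)$ carries $\mathcal F_k(E)$ into $\mathcal P_k$; writing $S_k(E)\subseteq\mathcal P_k$ for its image we get $\mathcal F_k(E)/\mathcal F_{k+1}(E)\cong S_k(E)$. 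Since a holomorphic function on (the component of $0$ of) $E$ is determined by its Taylor series at $0$, one has $\bigcap_k\mathcal F_k(E)=\{0\}$, and bookkeeping this exhaustive, separated filtration gives $\dim L_h^2(E)=\sum_k\dim S_k(E)$ in $[0,\infty]$. So it suffices to control each $\dim S_k(D_a)$ separately.

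The second step is to express $\dim S_k(E)$ through the higher order kernels. For $f\in\mathcal F_k(E)$ only the degree-$k$ part of $f$ survives the order-$k$ operator, so for $H=\sum_{|\alpha|=k}a_\alpha z^\alpha$ one has $P_H(f)(0)=\sum_{|\alpha|=k}a_\alpha\,\alpha!\,c_\alpha=:\langle H,f^{(k)}(0)\rangle$, where $c_\alpha$ are the coefficients of $f^{(k)}(0)$; because $\alpha!\neq 0$ this is a nondegenerate pairing on $\mathcal P_k\times\mathcal P_k$. Hence $K_E^H(0)=0$ precisely when $H$ annihilates $S_k(E)$, i.e. $\{H\in\mathcal P_k:K_E^H(0)=0\}=S_k(E)^{\perp}$, and nondegeneracy gives $\dim S_k(E)=\dim\mathcal P_k-\dim\{H\in\mathcal P_k:K_E^H(0)=0\}$.

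Now I would invoke Theorem~\ref{theorem:non-decreasing}: for fixed $H$ the map $a\mapsto K_{D_a}^H(0)$ is non-decreasing, so for $-\infty<a\le b\le 0$ the vanishing sets satisfy $\{H:K_{D_b}^H(0)=0\}\subseteq\{H:K_{D_a}^H(0)=0\}$; by the previous step this yields $\dim S_k(D_a)\le\dim S_k(D_b)$ for every $k$, whence $\dim L_h^2(D_a)\le\dim L_h^2(D_b)$ after summation. In the opposite direction the injective scaling embedding $L_h^2(D_b)\ni f\mapsto f(e^{a-b}\,\cdot)|_{D_a}\in L_h^2(D_a)$ noted above gives $\dim L_h^2(D_b)\le\dim L_h^2(D_a)$; since it multiplies the degree-$k$ leading term by the nonzero scalar $e^{(a-b)k}$, it even provides $\dim S_k(D_b)\le\dim S_k(D_a)$ level by level. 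The two bounds sandwich to $\dim S_k(D_a)=\dim S_k(D_b)$ for all $k$ and hence to $\dim L_h^2(D_a)=\dim L_h^2(D_b)$, which is the claim.

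The pairing computation and the scaling estimate are routine; the delicate point, and the place where I expect the real work, is the filtration identity $\dim L_h^2(E)=\sum_k\dim S_k(E)$ in the infinite-dimensional range, together with the connectedness issue it hides: if a sublevel set is disconnected then $\bigcap_k\mathcal F_k$ need not vanish and the jet at $0$ sees only the component of the pole, so the theorem-based inequality controls only that component. I would address this by reducing (as in the proof of Theorem~\ref{theorem:non-decreasing}, via the Ramadanov-type step) to a bounded hyperconvex $D$, arguing that the relevant $\{G<a\}$ are connected so that the identity theorem applies, and otherwise matching the remaining components through the scaling embedding, which is a global biholomorphism onto an open subset of $D_b$. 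This bookkeeping is the only juncture where properties of the sublevel sets beyond Theorem~\ref{theorem:non-decreasing} enter, and it is where I would concentrate the care.
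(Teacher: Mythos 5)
Your proof is correct in substance, but it takes a genuinely different route from the paper's. The paper argues constructively: given linearly independent $f_1,\dots,f_N\in L_h^2(D_a)$, it picks $k$ so large that their $k$-jets at $0$ are linearly independent and re-runs the Donnelly--Fefferman construction from the proof of Theorem~\ref{theorem:non-decreasing} (with $\varphi=2(n+k+1)G$ and a cut-off equal to $1$ near $-\infty$) to produce $F_1,\dots,F_N\in L_h^2(D)$ with the same $k$-jets, hence linearly independent; together with the scaling embedding this gives equality. You instead use only the \emph{statement} of Theorem~\ref{theorem:non-decreasing}: monotonicity of $a\mapsto K_{D_a}^H(0)$ for every homogeneous $H$ identifies, via the nondegenerate pairing $\langle H,Q\rangle=\sum_{|\alpha|=k}\alpha!\,a_\alpha q_\alpha$, the annihilator of the jet space $S_k(D_a)$, giving $\dim S_k(D_a)\le\dim S_k(D_b)$ for $a\le b$, while the scaling embedding gives $S_k(D_b)\subseteq S_k(D_a)$; summing over $k$ yields the claim. (Your direction for the scaling inequality is the right one; the display preceding the proposition in the paper has the two sides transposed.) Your route buys conceptual economy --- the proposition becomes a formal corollary of the theorem, with no further $\bar\partial$ work --- whereas the paper's constructive extension also yields the remark made afterwards (when $\dim L_h^2(D)<\infty$, functions in $L_h^2(\{G<a\})$ are restrictions of functions in $L_h^2(D)$), which your argument does not produce. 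Also, the filtration identity $\dim L_h^2(E)=\sum_k\dim S_k(E)$ in $[0,\infty]$, which you single out as the delicate point, is in fact routine: $\dim V\ge\dim(V/\mathcal F_{K+1})=\sum_{k\le K}\dim S_k$ gives one inequality, and if the sum were finite, say $S$, then any $(S+1)$-dimensional subspace $W$ would satisfy $\dim(W\cap\mathcal F_K)\ge 1$ for all $K$; the decreasing chain $W\cap\mathcal F_K$ stabilizes by finite-dimensionality, contradicting $\bigcap_k\mathcal F_k=\{0\}$.

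The genuinely delicate point is the one you flag, connectedness of the sublevel sets --- note that the paper's own proof needs it too, since choosing ``$k$ so big that the $\tilde f_l$ are linearly independent'' presupposes that the jet map at $0$ is injective on $L_h^2(D_a)$, and the injectivity of the scaling embedding needs it as well. But your proposed repair does not work and is not needed. It does not work because the dimension of the Bergman space is not stable under Ramadanov-type exhaustion (exhaust $\mathbb C^n$ by balls: the approximants have infinite-dimensional Bergman spaces, the limit a trivial one), and the proposition concerns the sublevel sets of the \emph{given} $D$, not of bounded hyperconvex approximants; likewise ``matching the remaining components through the scaling embedding'' is too vague to restore the identity theorem. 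It is not needed because every sublevel set of the pluricomplex Green function is connected: if $U$ were a connected component of $\{G<a\}$ with $0\notin U$, then the function equal to $G$ on $D\setminus U$ and to $a$ on $U$ is plurisubharmonic by the gluing lemma (since $G\ge a$ on $\partial U\cap D$, the set $\{G<a\}$ being open), negative, and has the logarithmic pole at $0$; extremality of $G$ forces it to be $\le G$, contradicting $G<a$ on $U$. With this observation inserted in place of your Ramadanov step, your argument is complete.
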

\begin{proof} It is sufficient to show that if $-\infty<a<0$ then the dimension of $L_h^2(D_a)$ is equal to that of $L_h^2(D)$. To prove this it is sufficient to show that if we get the system $\{f_1,\ldots,f_N\}$ of linearly independent elements of $L_h^2(D_a)$ then there are elements $F_1,\ldots,F_N$ from $L_h^2(D)$ linearly independent. For the functions $f_l$ we follow a construction from the proof of Theorem~\ref{theorem:non-decreasing}. First we choose $k$ so big that
the functions $\tilde f_l$, $l=1,\ldots,N$, are linearly indpendent in the space of polynomials, where $\tilde f_l(z):=\sum_{l=0}^k\frac{f_j^{(l)}(0)}{l!}(z)$, $j=1,\ldots,N$. Fix now a smooth  function $\chi:[-\infty,0]\to[0,1]$ such that $\chi$ equals $1$ near $-\infty$ and $\chi(t)=0$, $t\geq a$. Now starting with the functions $f_l$ we proceed with the construction of functions $F_l$ as in the proof of Theorem~\ref{theorem:non-decreasing} with $\varphi:=2(n+k+1)G$ and the mapping $\chi$. The functions $F_l$ are $L_h^2$ functions on $D$ that satisfy the equality
% being $1$ near $-\infty$ and $0$ for $t\geq a$) we define:
%\begin{equation}
%L_{D,a,k}:L_h^2(D_a)\owns f\to f(e^{a}\cdot)\cdot \chi\circ G-u\in L_h^2(D).
%\end{equation} 
$\tilde f_l\equiv \tilde F_l$, $l=1,\ldots,N$, which implies immediately the linear independence of $F_l$, $l=1,\ldots,N$.
\end{proof}

\begin{remark} Proposition~\ref{proposition:dimension-equality} together with Corollary~\ref{corollary:dimension-inequality} suggest that the equality of dimensions of all Bergman spaces $L_h^2(D_a)$, $-\infty\leq a\leq 0$ may hold, which in turn would reduce the problem of Wiegerinck from the general case to that in the class of pseudoconvex balanced domains (the set $D_{-\infty}$).
\end{remark}

\begin{remark} Note that the the results presented in this section imply that if $L_h^2(D)$ is finitely dimensional then all the functions lying in $L_h^2(\{G<a\})$, $-\infty<a<0$ are the restrictions of the functions from $L_h^2(D)$ -- this very special phenomenon is a fact which may serve as another hint that the problem of Wiegerinck should have a positive answer.

\end{remark}

\section{Other sufficient conditions for the positive solution of the problem of Wiegerinck}\label{section:wiegerinck-positive}
In this section we shall present two other sufficient conditions on domains that 
guarantee that the domain from the given class will give the positive answer to the problem of Wiegerinck.

\begin{theorem} Let $D$ be a pseudoconvex domain in 
$\mathbb C^n$ such that for some $w\in D$ and $a\leq 0$ 
the sublevel set $\{G_D(\cdot,w)<a\}$ does not satisfy the 
Liouville property, that is there exists a bounded nonconstant 
holomorphic function defined there. Then the Bergman space
$L^2_h(D)$ is either trivial or infinitely dimensional. 
\end{theorem}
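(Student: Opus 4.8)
The plan is to prove the stated dichotomy by excluding the only remaining possibility, namely that $L^2_h(D)$ is nontrivial but finite-dimensional. After a translation I may assume $w=0$, so that the sets $D_a=e^{-a}\{G_D(\cdot,0)<a\}$ are available. First I would observe that $z\mapsto e^{-a}z$ is a linear isomorphism of $\{G<a\}$ onto $D_a$, so $f\mapsto f(e^{-a}\,\cdot\,)$ identifies their Bergman spaces linearly; hence $\dim L^2_h(\{G<a\})=\dim L^2_h(D_a)$. Feeding this into Proposition~\ref{proposition:dimension-equality} shows that $\dim L^2_h(\{G<a\})$ is one and the same number $N:=\dim L^2_h(D)$ for \emph{every} $-\infty<a\le 0$. (The endpoint $a=0$ is harmless: since $G$ has a logarithmic pole at $0$ and satisfies $G\le 0$, the maximum principle forbids an interior zero, so $G<0$ on all of $D$ and $\{G<0\}=D=D_0$.) It then suffices to reach a contradiction from $1\le N<\infty$.

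Next I would exploit the failure of the Liouville property. Let $a_0\le 0$ be the value given by the hypothesis and put $U:=\{G_D(\cdot,0)<a_0\}$, so that there is a bounded nonconstant $g\in\mathcal O(U)$. By the first paragraph $\dim L^2_h(U)=N\ge 1$, so I may fix a nonzero $f\in L^2_h(U)$. Since $g$ is bounded, $|g^jf|\le \|g\|_{L^\infty(U)}^{\,j}|f|$ pointwise, whence every power $g^jf$ again belongs to $L^2_h(U)$. The crux is then the linear independence of $\{g^jf\}_{j\ge 0}$: if $\sum_{j=0}^m c_jg^jf\equiv 0$ with $P(t):=\sum_j c_jt^j\not\equiv 0$, then factoring $P(g)=c_m\prod_i(g-\lambda_i)$ and using that each $g-\lambda_i\not\equiv 0$ (as $g$ is nonconstant) while $\mathcal O(U)$ is an integral domain gives $P(g)\not\equiv 0$ and hence $P(g)f\not\equiv 0$, a contradiction. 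Thus the $g^jf$ are independent, $\dim L^2_h(U)=\infty$, contradicting $N<\infty$; so $N\in\{0,\infty\}$, which is exactly the assertion.

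The step I expect to be the main obstacle is the one silently used above: that $g$ cannot be constant on whatever connected piece of $U$ carries $f$, equivalently that the integral-domain argument applies on all of $U$. This requires $U$ to be connected. For $a_0=0$ this is immediate since $U=D$. For $a_0<0$ I would invoke connectedness of the sublevel sets of the pluricomplex Green function, which I would justify by a comparison argument: were $V$ a component of $\{G<a_0\}$ not containing the pole, then $G|_V$ would be a maximal plurisubharmonic function whose boundary values are $\ge a_0$ (equal to $a_0$ on $\partial V\cap D$ and $\to 0$ on $\partial V\cap\partial D$), so testing maximality against the constant competitor $a_0$ would force $G\ge a_0$ on $V$, contradicting $V\subseteq\{G<a_0\}$. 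Once connectedness is secured the second paragraph runs without change, and—should one wish to bypass the reduction to bounded $D$—the only external input is Proposition~\ref{proposition:dimension-equality}.
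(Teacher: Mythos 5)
Your argument is correct in its overall architecture, but it takes a genuinely different route from the paper's. The paper works directly on $D$: given a nonzero $f\in L^2_h(D)$ vanishing to order $k$ at $w$, and $Q$ bounded holomorphic on $\{G<a\}$ vanishing to order $m\ge 1$ at $w$, it solves $\bar\partial u=\bar\partial\bigl(Q^l f\,\chi\circ G\bigr)$ via the Donnelly--Fefferman estimate with weight $\varphi=2(n+k+lm)G$; since $e^{-\varphi}$ is non-integrable at $w$ to high order, the correction $F=Q^lf\,\chi\circ G-u$ lies in $L^2_h(D)$ and vanishes at $w$ to order exactly $k+lm$, and letting $l\to\infty$ produces infinitely many linearly independent elements of $L^2_h(D)$. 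You instead transfer the dimension count to the sublevel set itself, via the dilation $\{G<a\}\cong D_a$ and Proposition~\ref{proposition:dimension-equality}, and then run the elementary integral-domain trick with the powers $g^jf$ there. Both are legitimate and non-circular (Proposition~\ref{proposition:dimension-equality} is proved independently, by the same $\bar\partial$ machinery). Your route is softer --- all the hard analysis is quarantined inside the quoted proposition --- and it even records the stronger intermediate fact $\dim L^2_h(\{G<a\})=\dim L^2_h(D)$ for all finite $a\le 0$; the paper's route is self-contained within its section and outputs explicit functions on $D$ with prescribed finite vanishing orders, which is the more flexible device.

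The one genuine weak spot is your justification of the connectedness of $U=\{G<a_0\}$, which you rightly identify as the crux (note the paper needs it too, silently: otherwise $Q$ might vanish to infinite order at $w$, i.e.\ be constant on the pole's component). Your comparison argument assumes $G\to 0$ on $\partial V\cap\partial D$ and tests maximality of $G|_V$ against the constant competitor $a_0$ up to the boundary; but for the unbounded, possibly non-hyperconvex pseudoconvex domains the theorem is actually aimed at (this is Wiegerinck's problem), $G$ need not tend to $0$ at $\partial D$, the component $V$ may be unbounded or abut $\partial D$, and the domination principle for maximal psh functions fails without such boundary control. The fix avoids boundary behavior entirely: if $V$ is a component of $\{G<a_0\}$ with $w\notin V$, then every $\zeta\in\partial V\cap D$ satisfies $G(\zeta)\ge a_0$ (boundary points of a component cannot lie in the open sublevel set), so by the standard gluing lemma the function equal to $\max(G,a_0)=a_0$ on $V$ and to $G$ on $D\setminus V$ is plurisubharmonic on $D$, nonpositive, and retains the logarithmic pole at $w$ (one checks $w\notin\overline V$ since $G(w)=-\infty$); by extremality of $G_D(\cdot,w)$ this glued function is $\le G$, contradicting $G<a_0$ on $V$. (If $G\equiv-\infty$, the sublevel set is all of $D$ and there is nothing to prove.) With that replacement, your proof is complete.
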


\begin{proof} Assume that
there exists nonzero $f\in L^2_h(D)$. There exists
$k\geq 0$ such that $f^{(j)}(w)=0$ for  $j=0,1,\dots,k-1$ 
but $f^{(k)}(w)\neq 0$. We can also find $Q$ holomorphic and bounded 
in $\{G<a\}$, where $G=G_D(\cdot,w)$, and $m\geq 1$ 
such that $Q^{(j)}(w)=0$ for  $j=0,1,\dots,m-1$ but 
$Q^{(m)}(w)\neq 0$. For $l\geq 1$ define
  $$\alpha:=\bar\partial(Q^lf\chi\circ G)=
     Q^lf\chi'\circ G\,\bar\partial G,$$
where $\chi\in C^\infty(\mathbb R,\mathbb R)$ is such that
$\chi(t)=1$ for $t\leq b$ and $\chi(t)=0$ for $t\geq c$,
where $b$ and $c$ are such that $b<a<c<0$. Set
  $$\varphi:=2(n+k+lm)G,\ \ \ \psi:=-\log(-G),$$
then  
  $$i\bar\alpha\wedge\alpha
     \leq|Q|^{2l}|f|^2(\chi'\circ G)^2i\partial G\circ\bar\partial G
     \leq|Q|^{2l}|f|^2(\chi'\circ G)^2G^2 i\partial\bar\partial\psi$$
and by the Donnelly-Fefferman estimate there exists 
$u\in L^2_{loc}(D)$ with $\bar\partial u=\alpha$ and
 \begin{multline}
||u||^2
    \leq\int_D|u|^2e^{-\varphi}d\lambda
    \leq \\
4\int_D |Q|^{2l}|f|^2(\chi'\circ G)^2G^2
    e^{-2(n+k+lm)G}d\lambda
    \leq C||f||^2.
\end{multline}
Set $F=Q^lf\,\chi\circ G-u$. Then 
$F\in L^2_h(D)$ and $F^{(j)}(w)=0$ for $j=0,\dots,k+lm-1$,
but $F^{(k+lm)}(w)\neq 0$. Since $l$ is arbitrary, it follows that 
$L^2_h(D)$ is infinitely dimensional. 
\end{proof}

\begin{theorem} Let $D$ be a pseudoconvex domain in $\mathbb C^n$
and $w_j\in D$ an infinite sequence, not contained in any 
analytic subset of $D$, and such that for every $j\neq k$ there
exists $t<0$ such that $\{G_j<t\}\cap\{G_k<t\}=\emptyset$, where
$G_j:=G_D(\cdot,w_j)$. Then $L^2_h(D)$ is either trivial
or infinitely dimensional.
\end{theorem}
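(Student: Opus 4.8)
The plan is to establish the stronger statement that if $L^2_h(D)$ is nontrivial then it is infinite-dimensional, by producing, for every $N$, a system of $N$ linearly independent elements. As in the proofs of Theorem~\ref{theorem:non-decreasing} and of the previous theorem, I would begin with a Ramadanov-type reduction to the case where $D$ is bounded and hyperconvex, so that each $G_j:=G_D(\cdot,w_j)$ is continuous and has a logarithmic pole, $G_j(z)=\log|z-w_j|+O(1)$ near $w_j$. Fix a nonzero $f\in L^2_h(D)$. Here the hypothesis enters decisively: if $f(w_j)=0$ held for all but finitely many $j$, then $\{w_j\}$ would be contained in $\{f=0\}$ together with a finite set, i.e.\ in an analytic subset of $D$, contradicting the assumption. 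Hence $f(w_j)\neq0$ for infinitely many $j$, and after relabelling we may assume $f(w_j)\neq0$ for all $j$, the pairwise separation of sublevel sets being inherited.

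Fix $N$. For each pair $j\neq k$ there is $t_{jk}<0$ with $\{G_j<t_{jk}\}\cap\{G_k<t_{jk}\}=\emptyset$; putting $t:=\min_{1\le j\neq k\le N}t_{jk}$ and using that sublevel sets shrink as the threshold decreases, the sets $V_j:=\{G_j<t\}$, $j=1,\dots,N$, are pairwise disjoint, and $G_k\ge t$ on $V_j$ whenever $k\neq j$. I would then work with the single plurisubharmonic weight $\varphi:=2n\sum_{j=1}^N G_j$ and, for each fixed index $j_0$, a cut-off $\chi_{j_0}\in C^\infty(\mathbb R,[0,1])$ equal to $1$ near $-\infty$ and vanishing on $[c,\infty)$ for some $c<t$. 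Then $f\,\chi_{j_0}\circ G_{j_0}$ equals $f$ near $w_{j_0}$, while its support is contained in $\{G_{j_0}<c\}\subset V_{j_0}$, so it vanishes identically near every $w_k$ with $k\neq j_0$.

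Next I would solve $\bar\partial u_{j_0}=\alpha_{j_0}:=f\,(\chi_{j_0}'\circ G_{j_0})\,\bar\partial G_{j_0}$ by the Donnelly--Fefferman estimate with $\psi:=-\log(-G_{j_0})$, exactly as before: from $i\bar\alpha_{j_0}\wedge\alpha_{j_0}\le|f|^2(\chi_{j_0}'\circ G_{j_0})^2G_{j_0}^2\,i\partial\bar\partial\psi$ one obtains $u_{j_0}\in L^2_{loc}(D)$ with $\int_D|u_{j_0}|^2e^{-\varphi}\,d\lambda\le 4\int_D|f|^2(\chi_{j_0}'\circ G_{j_0})^2G_{j_0}^2e^{-\varphi}\,d\lambda$. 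The right-hand side is finite because $\alpha_{j_0}$ is supported in $V_{j_0}$, where $G_{j_0}$ is bounded and $G_k\in[t,0]$ for $k\neq j_0$, so $e^{-\varphi}$ is bounded there and the integral is $\lesssim\|f\|^2$. Since $e^{-\varphi}\ge1$ we get $u_{j_0}\in L^2(D)$, and since $e^{-\varphi}$ blows up like $|z-w_j|^{-2n}$ at each pole, finiteness of $\int|u_{j_0}|^2e^{-\varphi}$ forces $u_{j_0}(w_j)=0$ for every $j$. Setting $F_{j_0}:=f\,\chi_{j_0}\circ G_{j_0}-u_{j_0}\in L^2_h(D)$, we find $F_{j_0}(w_{j_0})=f(w_{j_0})\neq0$ while $F_{j_0}(w_k)=-u_{j_0}(w_k)=0$ for $k\neq j_0$. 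Thus the evaluation matrix $[F_{j_0}(w_k)]_{k,j_0}$ is diagonal with non-zero diagonal, so $F_1,\dots,F_N$ are linearly independent; as $N$ is arbitrary, $L^2_h(D)$ is infinite-dimensional.

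The step I expect to be the crux is the simultaneous control at all $N$ poles: the one weight $\varphi$ must force the correction $u_{j_0}$ to vanish at every $w_j$ at once, yet keep $\int|\alpha_{j_0}|^2e^{-\varphi}$ finite. This balance works precisely because the disjointness of the sublevel sets confines $\supp\alpha_{j_0}$ to $V_{j_0}$, where the remaining $G_k$ are bounded below by $t$ and hence contribute only a bounded factor to $e^{-\varphi}$, whereas each $G_j$ still produces the full singularity at its own pole $w_j$.
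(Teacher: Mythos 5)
Your proof is correct and takes essentially the same route as the paper's: a Donnelly--Fefferman estimate with plurisubharmonic weight $\varphi=2n\sum_j G_j$ and $\psi=-\log(-G_{j_0})$, where the pairwise disjointness of the sublevel sets confines $\supp\alpha_{j_0}$ to a region on which $e^{-\varphi}$ is bounded, while the non-integrability of $e^{-\varphi}$ at every pole forces the correction $u_{j_0}$ to vanish there. The only differences are cosmetic and harmless: you cut off along the single Green function $G_{j_0}$ to obtain a diagonal evaluation matrix, whereas the paper cuts off along the sum $G_1+\dots+G_{k-1}$ to obtain a triangular one, and your opening Ramadanov-type reduction to bounded hyperconvex $D$ is superfluous (and would need justification for a statement about $L^2_h(D)$ itself), but nothing in the rest of your argument actually uses boundedness or hyperconvexity.
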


\begin{proof} Assume that $f\in L^2_h(D)$, $f\not\equiv 0$.
Choosing a subsequence if necessary we may assume that 
$f(w_j)\neq 0$ for all $j$. For every $k$ we want to construct
$F\in L^2_h(D)$ such that $F(w_j)=0$ for $j=1,\dots,k-1$ but
$F(w_k)\neq 0$. It will then follow that $L^2_h(D)$ is
infitely dimensional.

We can find $t_k<0$ such that $\{G_j<t_k\}\cap\{G_l<t_k\}=\emptyset$
for $j,l=1,\dots,k$, $j\neq l$. Set $G:=G_1+\dots+G_{k-1}$ and 
  $$\alpha:=\bar\partial\big(f\chi\circ G\big)
    =f\chi'\circ G\,\bar\partial G,$$
where $\chi\in C^\infty(\mathbb R)$ is such that $\chi(t)=0$ for
$t\leq (k-1)t_k-2$ and $\chi(t)=0$ for $t\geq (k-1)t_k-1$.
Define the weights
  $$\varphi:=2n(G+G_k),\ \ \ \psi:=-\log(-G),$$
we then have
  $$i\bar\alpha\wedge\alpha\leq 
      |f|^2G^2(\chi'\circ G)^2i\partial\bar\partial\psi.$$
By the Donnelly-Fefferman estimate we can find 
$u\in L^2_{loc}(D)$ with $\bar\partial u=\alpha$, satisfying
the estimate
  $$||u||^2\leq\int_D|u|^2e^{-\varphi}d\lambda\leq 
     4\int_D|f|^2G^2(\chi'\circ G)^2e^{-\varphi}d\lambda.$$
For every $z\in D$ with $G(z)<(k-1)t_k$ there exists $j\leq k-1$
such that $G_j(z)<t_k$, and therefore $G_k\geq t_k$ on 
$\{G\leq(k-1)t_k-1\}$. It follows that $||u||<\infty$ and thus
$F:=f\chi\circ G-u\in L^2_h(D)$. Since $e^{-\varphi}$ is not
locally integrable near $w_1,\dots,w_k$, we conclude that 
$F(w_1)=\dots=F(w_{k-1})=0$ and $F(w_k)=f(w_k)$ (the latter
since $G(w_k)\geq (k-1)t_k$). 
\end{proof}

\section{Regularity of the volume of the Azukawa indicatrix}\label{section:regularity}
For $k\geq 1$ we define the \textit{$k$-th order Carath\'eodory-Reiffen pseudometric} as follows
\begin{equation}
\gamma_D^{(k)}(z;X):=\sup\left\{\left|f^{(k)}(z)X/k!\right |^{1/k}: f^{(j)}(z)=0,j=0,\ldots,k-1\right\},
\end{equation}
$z\in D,\; X\in\mathbb C^n$.

Recall that the bounded domain $D\subset\mathbb C^n$ is called \textit{strictly hyperconvex} if there are a bounded domain $\Omega\subset\mathbb C^n$, 
a continuous plurisubharmonic function $u:\Omega\to(-\infty,1)$ such that $D=\{u<0\}$, $u$ is exhaustive for $\Omega$ and for all $c\in[0,1]$ the set $\{u<c\}$ is connected (see \cite{Niv 1995}). 
It is elementary to see that $\gamma_D^{(k)}\leq A_D$. In general, the function $A_D$ is upper semicontinuous (see \cite{Jar-Pfl 1995}) and in the case of the hyperconvex $D$ even continuous (see \cite{Zwo 2000a}). 

It follows directly from the definition that the functions $D\times\mathbb C^n\owns (z;X)\to \gamma_D^{(k)}(z;X)$ are logarithmically plurisubharmonic. 
Recall that for a strictly hyperconvex $D$ and for any $z\in D$ we have the convergence $\lim\sb{k\to\infty}\gamma_D^{(k)}(z;X)=A_D(z;X)$ for almost all $X\in\mathbb C^n$ (Theorem 1 in \cite{Niv 1995}). Consequently, for strictly hyperconvex domain $D$ we get that the function
$D\times\mathbb C^n\owns(z;X)\to A_D(z;X)$ is logarithmically plurisubharmonic. Standard approximation properties allow us to deduce the following

\begin{proposition}\label{Azukawa-psh}
 Let $D$ be a pseudoconvex domain in $\mathbb C^n$. Then $\log A_D$ is plurisubharmonic (as a function defined on $D\times\mathbb C^n$).
\end{proposition}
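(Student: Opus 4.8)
The plan is to reduce the general pseudoconvex case to the strictly hyperconvex case established just above, by means of an increasing exhaustion. First I would choose an increasing sequence of strictly hyperconvex domains $D_\nu$ with $D_\nu \nearrow D$. Such an exhaustion exists: taking a smooth strictly plurisubharmonic exhaustion function $\rho$ of $D$, for all sufficiently large regular values $c$ the sublevel sets $\{\rho<c\}$ are bounded, smooth, strictly pseudoconvex and connected, hence strictly hyperconvex in the sense of Nivoche. For each $\nu$ the function $\log A_{D_\nu}$ is plurisubharmonic on $D_\nu\times\mathbb{C}^n$ by the strictly hyperconvex case.

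Next I would record the monotonicity and convergence of the Azukawa metrics along the exhaustion. Since $D_\nu\subset D_{\nu+1}\subset D$, the extremal characterization of the pluricomplex Green function gives $G_{D_\nu}(\cdot,w)\geq G_{D_{\nu+1}}(\cdot,w)\geq G_D(\cdot,w)$, and the standard result on increasing exhaustions of pseudoconvex domains yields $G_{D_\nu}(\cdot,w)\searrow G_D(\cdot,w)$. Passing through the defining expression $\limsup_{\lambda\to 0}\bigl(G(w+\lambda X,w)-\log|\lambda|\bigr)$, this gives $A_{D_\nu}(w;X)\geq A_{D_{\nu+1}}(w;X)\geq A_D(w;X)$, and, using the continuity of the Azukawa metric together with the convergence of the Green functions, $A_{D_\nu}\searrow A_D$ pointwise on $D\times\mathbb{C}^n$.

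Finally I would invoke the fact that a decreasing limit of plurisubharmonic functions is plurisubharmonic. Fixing $(z_0,X_0)\in D\times\mathbb{C}^n$, this point together with a neighborhood of it lies in $D_{\nu_0}\times\mathbb{C}^n$ for some $\nu_0$; on that neighborhood the functions $\log A_{D_\nu}$, $\nu\geq\nu_0$, form a decreasing sequence of plurisubharmonic functions with limit $\log A_D$, so $\log A_D$ is plurisubharmonic there. As $(z_0,X_0)$ is arbitrary and $\log A_D\not\equiv-\infty$ (the degenerate case $A_D\equiv 0$ being trivial), the function $\log A_D$ is plurisubharmonic on all of $D\times\mathbb{C}^n$.

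The step I expect to be the main obstacle is the convergence $A_{D_\nu}\searrow A_D$: one must ensure that the decreasing limit of the Azukawa metrics of the exhausting domains equals the Azukawa metric of $D$ itself, and not some strictly larger function. This amounts to interchanging the limit in $\nu$ with the $\limsup$ in $\lambda$ in the definition of $A_D$, which is exactly where the continuity and the convergence properties of the pluricomplex Green function under increasing pseudoconvex exhaustions (as in the references cited for the Azukawa metric) are needed. The construction of the strictly hyperconvex exhaustion—in particular the connectedness of the sublevel sets demanded by Nivoche's definition—is a secondary technical point, handled by restricting to large regular values of $\rho$.
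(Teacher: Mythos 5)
Your proof is correct and follows essentially the same route as the paper: the authors deduce the proposition from the strictly hyperconvex case with the single phrase ``standard approximation properties,'' and your exhaustion by strictly hyperconvex domains, combined with $G_{D_\nu}\searrow G_D$, $A_{D_\nu}\searrow A_D$, and the stability of plurisubharmonicity under decreasing limits, is exactly the argument that phrase refers to. You also correctly identify the only delicate point, namely justifying $A_{D_\nu}\searrow A_D$ (interchanging the exhaustion limit with the $\limsup$ in the definition), which is covered by the convergence results for the Green function and the Azukawa metric in the references the paper cites.
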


For the pseudoconvex domain $D\subset\mathbb C^n$ define the following pseudoconvex (see e. g. \cite{Jar-Pfl 2000} and use the logarithmic plurisubharmonicity of $A_D$)
Hartogs domain with the basis $D$ and  balanced fibers
\begin{equation}
\Omega_D:=\{(z;X)\in D\times\mathbb C^n:A_D(z;X)<1\}.
\end{equation}
Consequently, making use of Theorem 1.4 from \cite{Ber 1998} ($\Omega_D(z)=I_D(z)$) we get the following result.

\begin{theorem}\label{vol-psh}
 Let $D$ be a pseudoconvex domain in $\mathbb C^n$ then the function
\begin{equation}
D\owns z\to -\log\lambda^{2n}(I_D(z))
\end{equation}
is plurisubharmonic.
\end{theorem}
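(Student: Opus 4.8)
The plan is to reduce the statement to an application of the cited result of Berndtsson on plurisubharmonic variation of fiber volumes, using the auxiliary domain $\Omega_D$ already introduced. First I would record the key structural facts that have just been established: by Proposition~\ref{Azukawa-psh}, the function $(z;X)\mapsto\log A_D(z;X)$ is plurisubharmonic on $D\times\mathbb C^n$, and $A_D(z;\cdot)$ is homogeneous of degree one in $X$ (this follows directly from the definition of the Azukawa pseudometric). Consequently the set $\Omega_D=\{(z;X)\in D\times\mathbb C^n:A_D(z;X)<1\}$ is a pseudoconvex Hartogs domain over $D$ whose fiber over each $z\in D$ is precisely the balanced set $\Omega_D(z)=\{X:A_D(z;X)<1\}=I_D(z)$. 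The defining inequality $A_D<1$ is equivalent to $\log A_D<0$, and since $\log A_D$ is plurisubharmonic in the joint variable, $-\log A_D$ plays the role of the plurisubharmonic weight governing the Hartogs domain.

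The core of the argument is then to invoke Theorem 1.4 of \cite{Ber 1998}. That result asserts that for a pseudoconvex domain of this fibered type, the function assigning to each base point the negative logarithm of the Lebesgue volume of the corresponding fiber is plurisubharmonic on the base. Applying it to $\Omega_D$, with base $D$ and fiber $\Omega_D(z)=I_D(z)$, yields immediately that
\begin{equation}
D\owns z\mapsto -\log\lambda^{2n}(\Omega_D(z))=-\log\lambda^{2n}(I_D(z))
\end{equation}
is plurisubharmonic, which is exactly the assertion of the theorem.

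The step I expect to require the most care is verifying that $\Omega_D$ satisfies the precise hypotheses of Berndtsson's theorem, rather than the volume computation itself. In particular one must confirm that the pseudoconvexity of $\Omega_D$ follows from the joint plurisubharmonicity and balancedness of $A_D$ (this is the parenthetical reference to \cite{Jar-Pfl 2000}), and that the fibers have finite, positive volume so that the logarithm is well defined; the latter holds because $I_D(z)$ is a bounded balanced pseudoconvex set of positive measure for each $z$ in a pseudoconvex $D$. One should also note that measurability of $z\mapsto\lambda^{2n}(I_D(z))$ and the identification $\Omega_D(z)=I_D(z)$ are immediate from the definitions, so no separate regularity argument is needed. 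Given Proposition~\ref{Azukawa-psh} and the cited theorem, the proof is essentially a matter of assembling these observations, and the principal subtlety is simply checking that the framework of \cite{Ber 1998} applies verbatim to the Hartogs domain $\Omega_D$.
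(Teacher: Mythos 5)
Your proposal is exactly the paper's argument: the authors prove Theorem~\ref{vol-psh} by combining Proposition~\ref{Azukawa-psh} (joint plurisubharmonicity of $\log A_D$), the resulting pseudoconvexity of the balanced-fiber Hartogs domain $\Omega_D$ (via \cite{Jar-Pfl 2000}), and Theorem 1.4 of \cite{Ber 1998} applied with $\Omega_D(z)=I_D(z)$, precisely as you do. One caveat on your side remark: the claim that $I_D(z)$ is always a \emph{bounded} set of finite positive volume is false for unbounded pseudoconvex $D$ (e.g.\ $D=\mathbb C^n$ gives $I_D(z)=\mathbb C^n$), but this verification is not needed for --- and is not made in --- the paper's proof, which rests entirely on Berndtsson's theorem.
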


It is natural to ask the question on the logarithmic convexity of $A_D$ in the case $D$ is convex. It turns out that the answer is positive.

\begin{theorem}\label{vol-convex}
 Let $D$ be a convex domain in $\mathbb C^n$. Then the function
\begin{equation}
D\owns z\to -\log\lambda^{2n}(I_D(z))
\end{equation}
is convex.
\end{theorem}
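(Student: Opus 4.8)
The plan is to realize the given function as minus the logarithm of the volume of the fibres of a convex Hartogs domain, and then invoke the Pr\'ekopa theorem. Recall the domain $\Omega_D=\{(z;X)\in D\times\mathbb C^n:A_D(z;X)<1\}$ introduced above, whose fibre over $z$ is exactly $I_D(z)$. For pseudoconvex $D$ this domain is only pseudoconvex and Berndtsson's theorem (\cite{Ber 1998}) yields plurisubharmonicity, which is Theorem~\ref{vol-psh}. The point is that when $D$ is convex the domain $\Omega_D$ is in fact convex, and the convex (rather than merely pluricomplex) analogue of Berndtsson's theorem --- which is precisely the Pr\'ekopa theorem on log-concavity of marginals of log-concave functions --- upgrades plurisubharmonicity to the desired convexity of $z\mapsto-\log\lambda^{2n}(I_D(z))$.

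First I would establish the key geometric fact: if $D$ is convex then $\Omega_D$ is convex. Here I would use Lempert's theorem, valid for convex $D$, giving $A_D=\kappa_D$, the Kobayashi--Royden metric (see \cite{Jar-Pfl 2000}). Fix $(z_0;X_0),(z_1;X_1)\in\Omega_D$, so that $A_D(z_j;X_j)<1$, and choose $r<1$ with $A_D(z_j;X_j)<r$ for $j=0,1$. By the definition of $\kappa_D=A_D$ as an infimum and a harmless reparametrisation of the resulting near-extremal discs so that both acquire the same speed, there are holomorphic discs $\varphi_j\colon\mathbb D\to D$ with $\varphi_j(0)=z_j$ and $\varphi_j'(0)=X_j/r$. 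For $t\in[0,1]$ set
\[
  \Phi_t:=(1-t)\varphi_0+t\varphi_1 .
\]
Since $D$ is convex, $\Phi_t(\mathbb D)\subset D$, while $\Phi_t(0)=z_t:=(1-t)z_0+tz_1$ and $\Phi_t'(0)=X_t/r$ with $X_t:=(1-t)X_0+tX_1$. Hence $\kappa_D(z_t;X_t)\le r<1$, and therefore $A_D(z_t;X_t)\le\kappa_D(z_t;X_t)<1$, i.e. $(z_t;X_t)\in\Omega_D$. This proves the convexity of $\Omega_D$.

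Next I would apply the Pr\'ekopa theorem. Writing $\mathbf 1_{\Omega_D}$ for the indicator function of the convex set $\Omega_D\subset\mathbb R^{2n}\times\mathbb R^{2n}$, this function is log-concave, and its integral in the fibre variable is
\[
  g(z):=\int_{\mathbb C^n}\mathbf 1_{\Omega_D}(z;X)\,d\lambda^{2n}(X)=\lambda^{2n}(I_D(z)).
\]
By Pr\'ekopa's theorem the marginal of a log-concave function is log-concave, so $g$ is log-concave on $D$, which is exactly the convexity of $z\mapsto-\log\lambda^{2n}(I_D(z))$.

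The main obstacle is the convexity of $\Omega_D$; everything else is a black-box application of the Pr\'ekopa inequality. Two technical points deserve care. First, the passage from the strict inequality $A_D(z_j;X_j)<1$ to the existence of the discs $\varphi_j$ requires $\kappa_D\le A_D$, i.e. precisely the nontrivial half of Lempert's theorem for convex domains; this is exactly where convexity, as opposed to mere pseudoconvexity, enters. Second, if $D$ is unbounded some fibres $I_D(z)$ may have infinite volume, whence $-\log\lambda^{2n}(I_D(z))=-\infty$; I would dispose of this either by reading the statement on the locus where the volume is finite, or by exhausting $D$ with bounded convex subdomains and passing to the limit, using the monotone behaviour of the Azukawa metric under such an exhaustion.
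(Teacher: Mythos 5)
Your proposal is correct and takes essentially the same route as the paper: your convexity of $\Omega_D$ is exactly the paper's fiberwise inclusion $tI_D(w)+(1-t)I_D(z)\subset I_D(tw+(1-t)z)$, proved by the same device of taking convex combinations of analytic discs supplied by Lempert's theorem $A_D=\kappa_D$. Your final appeal to Pr\'ekopa's theorem applied to the indicator function of the convex set $\Omega_D$ is equivalent to the paper's direct application of the Brunn--Minkowski inequality (log-concavity of Lebesgue measure) to the fibres, so the two arguments differ only in packaging.
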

\begin{proof}
Due to the Lempert Theorem (see e. g. \cite{Lem 1981})) we have the equality $A_D=\kappa_D$, where $\kappa_D$ is the Kobayashi pseudometric of $D$. Without loss of generality we may assume that $D$ is bounded.
Let
$t\in[0,1]$, $w,z\in D$. We claim that $tI_D(w)+(1-t)I_D(z)\subset
I_D(tw+(1-t)z)$. Actually, let $X\in I_D(w)$, $Y\in I_D(z)$. Then
there are analytic discs $f,g:\mathbb D\to D$ such that $f(0)=w$, $g(0)=z$, $f^{\prime}(0)=X$,
$g^{\prime}(0)=Y$. Consequently, the mapping
$h:=tf+(1-t)g$ maps $\mathbb D$ into $D$, $h(0)=tw+(1-t)z$, $h^{\prime}(0)=tX+(1-t)Y$,
so $tX+(1-t)Y\in I_D(tw+(1-t)z)$.

It follows from the Brunn-Minkowski inequality that the Lebesgue measure is logarithmically concave (see e. g. \cite{Pre 1980}); therefore,
\begin{multline}
\lambda^{2n}(I_D(tw+(1-t)z)\geq \lambda^{2n}(tI_D(w)+(1-t)I_D(z))\geq\\
\lambda^{2n}(I_D(w))^t\lambda^{2n}(I_D(z))^{1-t}
\end{multline} 
which finishes the proof.
\end{proof}

%Recall that for a bounded pseodconvex domain $D$ in $\mathbb C^n$ we define the function $F_D$ by the formula
%$F_D(z):=\lambda^{2n}(I_D(z))K_D(z)$, $z\in D$.

%$F_D\geq 1$ (see \cite{Blo-Zwo 2015}) and 

The higher dimensional Suita conjecture (i. e. the inequality (\ref{equation:suita-conjecture})) may also be presented in the following way
\begin{equation}
F_D(w):=\root{n}\of{K_D(w)\cdot \lambda^{2n}(I_D(w))}\geq 1,\; w\in D.
\end{equation}
Note that the function $F_D$ has the following properties:
\begin{itemize}
\item $F$ is the biholomorphic invariant,\\
\item if $D$ is a bounded pseudoconvex balanced domain then $F_D(0)=1$.
\end{itemize}

Recall that for all non-trivial examples studied so far we have the property
$\lim\sb{z\to\partial D}F_D(z)=1$ (see \cite{Blo-Zwo 2015} and \cite{Blo-Zwo 2016}). Recently, the property was proven for the class of strongly pseudoconvex domains.

\begin{proposition}\label{strongly-psc} {\rm (see \cite{Bal-Bor-Mah-Ver 2018})}
 Let $D$ be a strongly pseudoconvex domain in $\mathbb C^n$. Then $\lim\sb{z\to\partial D}F_D(z)=1$.
\end{proposition}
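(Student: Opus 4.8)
The function $F_D$ being a biholomorphic invariant, the natural strategy is to reduce the boundary behavior to a computation on a model domain via localization and rescaling. Fix $\zeta\in\partial D$ and a sequence $z_j\to\zeta$. Since $D$ is strongly pseudoconvex, after a local biholomorphic change of coordinates we may assume $\zeta=0$ and that $\partial D$ is given near $0$ by $\re w_n+|w'|^2+o(|w|^2)<0$, where $w=(w',w_n)$. Applying the standard (Pinchuk) anisotropic dilations $\Phi_j$, which are affine automorphisms of $\mathbb C^n$ normalized so that $\Phi_j(z_j)$ equals a fixed base point $p_0$, the rescaled domains $D_j:=\Phi_j(D)$ converge, in the Hausdorff sense, to the Siegel model $\Sigma:=\{w\in\mathbb C^n:\re w_n+|w'|^2<0\}$, which is biholomorphic to the ball $\BB^n$. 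Because each $\Phi_j$ is affine, hence biholomorphic, the invariance of $F$ gives $F_D(z_j)=F_{D_j}(p_0)$, so it suffices to prove that $F_{D_j}(p_0)\to F_\Sigma(p_0)$ and that $F_\Sigma\equiv 1$.

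The endpoint value is immediate: since $\BB^n$ is a bounded pseudoconvex balanced domain we have $F_{\BB^n}(0)=1$, and by homogeneity of the ball together with the biholomorphic invariance of $F$ we get $F_{\BB^n}\equiv 1$, hence $F_\Sigma\equiv 1$. The core of the argument is therefore the stability of the two factors of $F$ under the convergence $D_j\to\Sigma$. For the Bergman kernel one proves $K_{D_j}(p_0)\to K_\Sigma(p_0)$ by the standard stability of the kernel under domain convergence: the decisive inequality is obtained by solving $\bar\partial$ with $L^2$-estimates (a Ramadanov-type argument, exactly of the flavor used at the start of the proof of Theorem~\ref{theorem:non-decreasing}) to transplant the extremal function of the model with controlled norm, the reverse inequality following from an analogous transplantation and from lower semicontinuity of the kernel. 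For the Azukawa factor one proves $\lambda^{2n}(I_{D_j}(p_0))\to\lambda^{2n}(I_\Sigma(p_0))$; here one uses that strongly pseudoconvex (hence hyperconvex) domains have continuous Azukawa metric and that the pluricomplex Green function is stable under the scaling, so that $I_{D_j}(p_0)\to I_\Sigma(p_0)$ in the Hausdorff sense, whence the volumes converge (the indicatrices being uniformly bounded). Combining the two convergences yields $F_{D_j}(p_0)\to F_\Sigma(p_0)=1$.

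An equivalent route, avoiding the abstract scaling limit, is to invoke the classical boundary asymptotics directly: by the H\"ormander--Fefferman expansion, $K_D(z)(-\rho(z))^{n+1}$ tends to $\frac{n!}{\pi^n}$ times the Monge--Amp\`ere (bordered complex Hessian) determinant of a defining function $\rho$ at $\zeta$, and one shows a matching asymptotic $\lambda^{2n}(I_D(z))\sim c_n(-\rho(z))^{n+1}$ with the reciprocal determinantal factor, so that the determinants and the powers of $(-\rho)$ cancel in the product and the universal constants combine to $1$. The main obstacle in either approach is the same, namely controlling the Azukawa side: the Bergman asymptotics are classical, but the boundary behavior of the volume of the Azukawa indicatrix --- equivalently the scaling stability of the pluricomplex Green function near a strongly pseudoconvex boundary point, \emph{uniformly} over the approach of $z_j$ to $\partial D$ --- is the delicate point and is where the bulk of the work lies. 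The compactness of $\partial D$ and the uniform strong pseudoconvexity are what make the convergence uniform in $\zeta$, and hence upgrade the sequential statement to the genuine limit $\lim_{z\to\partial D}F_D(z)=1$.
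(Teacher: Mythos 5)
You have hit on a genuinely different strategy from the paper's, but as written it has two real gaps. The first is the invariance step: you say that "after a local biholomorphic change of coordinates" the boundary is normalized at $\zeta$, and then conclude $F_D(z_j)=F_{D_j}(p_0)$ from the biholomorphic invariance of $F$. But the normalizing map (and the Pinchuk setup generally) is a biholomorphism of a neighborhood $U$ of $\zeta$ onto its image, not of $D$ itself, while $F$ is only invariant under biholomorphisms of the whole domain. To launch the scaling argument you would first need localization theorems near a strongly pseudoconvex boundary point for \emph{both} factors of $F$: for the Bergman kernel this is classical, but for the pluricomplex Green function and hence the Azukawa indicatrix it is a nontrivial result in its own right, and nothing in your sketch supplies it or even flags it.

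The second and deeper gap is that both of your routes delegate the decisive step rather than prove it. In the scaling route, the Hausdorff convergence $I_{D_j}(p_0)\to I_{\Sigma}(p_0)$ is asserted; continuity of $A_{D}$ on each fixed hyperconvex domain gives nothing about convergence along a non-monotone sequence of domains tending to an unbounded model, and the required uniformity over boundary points and approach directions is exactly what you yourself describe as "where the bulk of the work lies" --- so the proposal stops precisely where the proof must begin. In the Fefferman route, the matching asymptotic $\lambda^{2n}(I_D(z))\sim c_n(-\rho(z))^{n+1}$ with the reciprocal determinantal factor is, given the classical Bergman expansion, equivalent to the proposition itself; invoking it is circular. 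By contrast, the proof the paper points to (from \cite{Bal-Bor-Mah-Ver 2018}, resting on \cite{Die-For-Wold 2014} via Theorem 4.1 of \cite{Kim-Zhang 2014}) sidesteps localization and Green-function stability entirely by using the squeezing function: for strongly pseudoconvex $D$ one has $s_D(z)\to 1$ as $z\to\partial D$, and taking a \emph{global} embedding $\varphi:D\to\BB^n$ with $\varphi(z)=0$ and $B(0,s_D(z))\subset\varphi(D)\subset\BB^n$, monotonicity of the Bergman kernel and of the Green function give $K_{\varphi(D)}(0)\le n!\,\pi^{-n}s_D(z)^{-2n}$ and $I_{\varphi(D)}(0)\subset I_{\BB^n}(0)=\BB^n$, whence $1\le F_D(z)\le s_D(z)^{-2}$ by invariance and the inequality (\ref{equation:suita-conjecture}). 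Because the comparison maps are global biholomorphisms onto their images sandwiched between balls, no scaling limit, no localization, and no stability of indicatrices is ever needed; if you want to salvage your approach, the missing lemmas (localization and scaling stability of the Green function near strongly pseudoconvex points) are known but each requires a separate argument of comparable length to the whole squeezing proof.
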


Note that the above property follows directly from a recent result from \cite{Die-For-Wold 2014} (see Theorem 4.1 in \cite{Kim-Zhang 2014}).

%\begin{proposition}\label{strongly-psc}
% Let $D$ be a strongly pseudoconvex domain in $\mathbb C^n$. Then $\lim\sb{z\to\partial D}F_D(z)=1$.
%\end{proposition}

{\bf Remark} The above considerations let us formulate the question whether the convergence above holds in two following cases

\begin{itemize}
 \item $D$ is a bounded convex domain,
  \item $D$ is a bounded smooth pseudoconvex domain.
\end{itemize}

\end{document}